\newcommand{\cii}[1]{_{ {}_{ #1}}}
\newcommand{\av}[2]{\langle #1\rangle\cii {#2}}
\newcommand{\cF}{\mathcal{F}}
\newcommand{\R}{\mathbb{R}}
\newcommand{\A}[1]{A(#1)}
\newcommand{\tcF}{\A{\cF}}
\newcommand{\cl}{\mathrm{cl}}
\newcommand{\dfi}{\partial_{\mathrm{fixed}}}
\newcommand{\dfree}{\partial_{\mathrm{free}}}
\newcommand{\BMO}{\mathrm{BMO}}
\newcommand{\bmody}{\mathrm{BMO}^d([0,1]^n)} % Это обозначение диадического БМО на кубе размерности n
\newcommand{\eps}{\varepsilon}
\newcommand{\Class}{\boldsymbol{A}}
\newcommand{\vf}{\varphi}
\newtheorem{Th}{Theorem}
\newtheorem{Le}{Lemma}
\newtheorem{Def}{Definition}
\newtheorem{Prop}{Proposition}
\newtheorem{Cor}{Corollary}
\begin{document}
\title[Monotonic rearrangements]{Monotonic rearrangements of functions with small mean oscillation}
\author{Dmitriy M. Stolyarov}
\address{
Institute for Mathematics, Polish Academy of Sciences, Warsaw\\
P. L. Chebyshev Research Laboratory, St. Petersburg State University\\
St. Petersburg Department of Steklov Mathematical Institute, Russian Academy of Sciences (PDMI RAS).
}
\email{dstolyarov@impan.pl, dms@pdmi.ras.ru}
\urladdr{http://www.chebyshev.spb.ru/DmitriyStolyarov}

\author{Vasily I. Vasyunin}
\address{St. Petersburg Department of Steklov Mathematical Institute, Russian Academy of Sciences (PDMI RAS).}
\email{vasyunin@pdmi.ras.ru} 
\author{Pavel B. Zatitskiy}
\address{P. L. Chebyshev Research Laboratory, St. Petersburg State University\\
St. Petersburg Department of Steklov Mathematical Institute, Russian Academy of Sciences (PDMI RAS).
}
\email{paxa239@yandex.ru}
\urladdr{http://www.chebyshev.spb.ru/pavelzatitskiy}
%\webpage{http://www.chebyshev.spb.ru/pavelzatitskiy}

\keywords{BMO, Muckenhoupt class, monotonic rearrangement}
\subjclass[2010]{42B35, 52A10, 60G46}
\thanks{The research is supported by the grant of the Russian Science Foundation (project 14-21-00035).}

\begin{abstract}
We obtain sharp bounds for the monotonic rearrangement operator from ``dyadic-type'' classes to ``continuous''. In particular, for the~$\BMO$ space and Muckenhoupt classes.  

The idea is to connect the problem with a simple geometric construction named $\alpha$-extension.
\end{abstract}
\maketitle 

\section{Introduction}
The~$\BMO$ space has many nice properties. One of them is that the monotonic rearrangement operator is bounded on this space (see~\cite{GR,BSV}). In other words, the inequality
\begin{equation*}
\|f^*\|_{\BMO} \leq c\|f\|_{\BMO}
\end{equation*} 
holds true with some constant~$c$. It is not hard to see that~$c \geq 1$. Soon it was noticed that~$c = 1$ when the dimension of the underlying space is one (see~\cite{Klemes}).

The same boundedness is also present when the~$\BMO$ space is replaced by its relatives: the Muckenhoupt classes or the Gehring classes (see~\cite{Wik1,Wik2} for the Muckenhoupt class and~\cite{FM} for the Gehring class). And again, if the underlying space is an interval, then the constant in the corresponding inequality equals to one, i.e.
\begin{equation}\label{A_p_OneDimensional}
[f^*]_{A_p} \leq [f]_{A_p} 
\end{equation}
(see~\cite{BSW} for~$A_1$ and~\cite{Korenovskii} for the general case). On the other hand, see~\cite{BSW} for an example showing that inequality~\eqref{A_p_OneDimensional} does not hold for higher-dimensional case.

In~\cite{SZ} the authors developed a setting that unifies the three cases (and, moreover, covers a more general situation described in \cite{5A} for the case of a related extremal problem) and gave a proof of an inequality that generalizes~\eqref{A_p_OneDimensional} to the setting of that paper. The proof relied on passing to a certain class of martingales. 

It seems a rather difficult problem to calculate the norm of the monotonic rearrangement operator in higher dimensions. Not able to solve it, we cope with a problem that lies towards it. Namely, we will calculate the aforementioned norm for the case when the space~$\BMO$ (or any other class of similar nature) is dyadic. The dyadic classes seem to be a step towards the higher-dimensional case not only in our problem, but, for example, in the problem of finding the sharp constant in the John--Nirenberg inequality (see~\cite{SV}). For numerous applications of monotonic rearrangements in variuos estimations see~\cite{DK,KS,MN1,MN2,RS} and references therein.

We briefly formulate the corollaries of our abstract considerations that concern the classical cases of the~$\BMO$ space and the Muckenhoupt class. Let $n\in \mathbb{N}$ and let $\mathcal{D}$ be the set of all dyadic subcubes of~$[0,1]^n$. Let us consider now the dyadic $\BMO$ space on $[0,1]^n$ with the quadratic seminorm: 
\begin{multline}\label{DyadicSpace}
\bmody=\{\vf\in L^1([0,1]^n)\colon \\
\|\vf\|^2_{\bmody}=\sup_{I\in\mathcal{D}}(\av{\vf^2}{I}-\av{\vf}{I}^2)<+\infty\}.
\end{multline}
If the supremum of the same value is taken over all subcubes of~$[0,1]^n$, then we obtain the usual (``continuous'')~$\mathrm{BMO}$ quadratic seminorm (in this paper, we consider only quadratic seminorms on~$\mathrm{BMO}$).

The monotonic rearrangement of a function~$\varphi$ from this space is a monotone (say, non-increasing) function~$\varphi^*$ on~$[0,1]$ with the same distribution as the function itself. The (non-linear) operator~$\varphi \mapsto \varphi^*$ is called the monotonic rearrangement operator.
\begin{Cor}\label{corBMO}
The monotonic rearrangement operator acts from the space~$\bmody$ to $\mathrm{BMO}([0,1])$ and its norm equals~$\frac{1+2^n}{2^{1+n/2}}$.
\end{Cor}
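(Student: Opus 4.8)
The plan is to first reduce the $n$-dimensional dyadic problem to a one-dimensional $m$-adic one with $m=2^n$, then to produce the extremal function by hand for the lower bound, and finally to obtain the matching upper bound from the geometry of the parabolic strip. For the reduction, fix a measure-preserving bijection $\Phi\colon[0,1]^n\to[0,1]$ that carries the dyadic cubes of $[0,1]^n$ onto the $m$-adic subintervals of $[0,1]$ (enumerate the $2^n$ children of each cube and read off a mixed-radix/base-$m$ address). Since $\Phi$ preserves measure, $\vf$ and $\vf\circ\Phi^{-1}$ have the same distribution, hence the same monotonic rearrangement, while $\|\vf\|_{\bmody}$ equals the $m$-adic quadratic seminorm of $\vf\circ\Phi^{-1}$ on $[0,1]$. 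Thus it suffices to show that the monotonic rearrangement operator from $m$-adic $\BMO([0,1])$ to $\BMO([0,1])$ has norm $\frac{m+1}{2\sqrt m}$, which equals $\frac{1+2^n}{2^{1+n/2}}$.

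For the lower bound I would exhibit a single extremal function. Let $\psi=a\,\mathbf 1_{[0,\theta]}-a\,\mathbf 1_{(\theta,1]}$ be the two-valued decreasing function with jump at $\theta=\tfrac1{m+1}$. On one hand, the variance of $\psi$ over an interval meeting the jump in proportions $p,1-p$ equals $4a^2p(1-p)$, so $\|\psi\|_{\BMO([0,1])}^2=a^2$, attained on the balanced interval $[0,\tfrac2{m+1}]$. On the other hand, $\psi$ is the monotonic rearrangement of a self-similar $\vf$ whose value-$a$ set is concentrated along a single nested chain of dyadic cubes $Q_0\supset Q_1\supset\cdots$, the densities of $\{\vf=a\}$ in $Q_k$ alternating between $\tfrac1{m+1}$ and $\tfrac{m}{m+1}$ while every cube off the chain is monochromatic. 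Each $Q_k$ then contributes variance $4a^2\tfrac{m}{(m+1)^2}$ and all other cubes contribute $0$, so the $m$-adic seminorm of $\vf$ is exactly $2a\sqrt m/(m+1)$. The ratio of the two seminorms is $\frac{m+1}{2\sqrt m}$, which gives the lower bound.

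The matching upper bound is the heart of the matter and is where I expect the real difficulty. I would encode the $m$-adic data of $\vf$ as the trajectory of the points $(\av{\vf}{I},\av{\vf^2}{I})$ inside the parabolic strip $\Omega_\eps=\{(x_1,x_2):x_1^2\le x_2\le x_1^2+\eps^2\}$, $\eps=\|\vf\|_{\bmody}$, where the $m$-adic splitting becomes the equal-weight barycentric relation between a node and its $m$ children. The variance of the rearrangement $\vf^*$ over a continuous subinterval is the height of the corresponding average point above the parabola $x_2=x_1^2$, and the set of points so reachable from an $\Omega_\eps$-valued $m$-adic martingale is exactly the $\alpha$-extension of $\Omega_\eps$ — the one-parameter family of chord constructions (indexed by the continuous split ratio $\alpha\in[0,1]$) analyzed abstractly in this paper. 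The task is then to show that this extension is contained in $\Omega_{C\eps}$ with $C=\frac{m+1}{2\sqrt m}$ and with no smaller constant. The main obstacle is precisely this sharp containment: it requires a convexity/Bellman-function argument on the strip together with the reduction of the extremal configuration to the two-valued ``single-jump'' profile found above, which is what pins the optimal ratio to $\tfrac1{m+1}$ and forces the value $\frac{m+1}{2\sqrt m}=\frac{1+2^n}{2^{1+n/2}}$.
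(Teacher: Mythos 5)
Your reduction to the $m$-adic line ($m=2^n$) is sound, and your lower bound is correct and complete: the self-similar two-valued function whose value-$a$ set has density alternating between $\frac{1}{m+1}$ and $\frac{m}{m+1}$ along a nested chain, all off-chain cubes being monochromatic, does have $m$-adic quadratic seminorm $\frac{2a\sqrt m}{m+1}$ while its rearrangement has continuous seminorm $a$; the consistency check (off-chain children of a $\frac{1}{m+1}$-cube must be all $-a$, those of a $\frac{m}{m+1}$-cube all $+a$, and the total density comes out to $(m-1)\sum_{j\ge1}m^{-2j}=\frac{1}{m+1}$) works. This is a nice concrete alternative to the extremal configuration the paper builds inside the proof of Theorem~\ref{Th1}, which uses a three-valued function supported on a single parent--child split rather than an infinite chain.

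The genuine gap is the upper bound, and it is not a small one: as written, you have proved only that the norm is \emph{at least} $\frac{1+2^n}{2^{1+n/2}}$, not that the operator is bounded by this constant at all. Your plan has the right shape --- it mirrors the paper's decomposition into Theorem~\ref{Th1} plus Proposition~\ref{propbmo} --- but both constituent steps are asserted, not proved. First, the claim that interval averages of $\vf^*$ are confined to an $\alpha$-extension of $\Omega_\eps$ (with $\alpha=2^{-n}$) is not definitional and is the deep ingredient: in the paper it is the implication $(2)\implies(1)$ of Theorem~\ref{Th1}, whose proof is not a convexity argument on the strip but an appeal to the martingale embedding theorem (Theorem~3.4 of \cite{SZ}); some real work is needed to pass from the discrete $m$-adic splitting structure to averages over \emph{arbitrary} continuous subintervals of $[0,1]$. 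Second, the sharp containment you flag as the ``main obstacle'' --- that the minimal $\alpha$-extension of $\Omega_\eps$ is $\Omega_{\eps'}$ with $\eps'=\frac{1+\alpha}{2\sqrt\alpha}\,\eps$ --- is the paper's Proposition~\ref{propbmo}, and contrary to your expectation it needs no Bellman-function machinery or reduction to an extremal profile: one computes the horizontal ``higher'' chords explicitly, which yields the point $(0,\eps'^2)$, and then sweeps out $\Omega_{\eps'}$ using the invariance of the parabolic strip under the maps $(u_1,u_2)\mapsto(u_1+t,\,u_2+2u_1t+t^2)$. Until these two steps are supplied, the heart of the corollary (the boundedness with the stated constant) remains unproved.
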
 

Let us consider the dyadic Muckenhoult class~$A_2^d$ on $[0,1]^n$. A positive function $\vf$ on~$[0,1]^n$ is in $A_2^d$ with  the constant $Q$ if $\av{\vf}{I} \av{\vf^{-1}}{I} \leq Q$ for any $I \in \mathcal{D}$. Define the set $A_{2,Q}^d([0,1]^n)$ by the formula
\begin{equation*}
A_{2,Q}^d([0,1]^n)= \{\vf \in L^1([0,1]^n)\colon \sup_{I \in \mathcal{D}}\big(\av{\vf}{I} \av{\vf^{-1}}{I}\big) \leq Q\}.
\end{equation*}
 Again, $A_{2,Q}([0,1])$ stands for the set of functions on $[0,1]$, for which a similar supremum taken over all subintervals of~$[0,1]$ does not exceed~$Q$.

\begin{Cor}\label{corA2}
The monotonic rearrangement operator acts from $A_{2,Q}^d([0,1]^n)$ to $A_{2,Q'}([0,1])$ if and only if $Q' \geq \frac{Q(2^n+1)^2-(2^n-1)^2}{2^{n+2}}$.
\end{Cor}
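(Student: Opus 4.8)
The plan is to specialize the paper's abstract machinery to the Muckenhoupt domain and then carry out the resulting geometric optimization. First I would pass to the coordinates $x_1 = \av{\vf}{I}$, $x_2 = \av{\vf^{-1}}{I}$, in which the class $A^d_{2,Q}$ becomes the requirement that the point $(x_1,x_2)$ attached to every dyadic cube lies in the domain $\Omega_Q=\{(x_1,x_2):x_1,x_2>0,\ 1\le x_1x_2\le Q\}$; here the lower bound $x_1x_2\ge 1$ is automatic from Cauchy--Schwarz. The hyperbola $x_1x_2=1$ plays the role of the fixed boundary $\dfi$ (no oscillation, $\vf$ locally constant) and $x_1x_2=Q$ is the free boundary $\dfree$. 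Splitting a cube into its $2^n$ equal dyadic children writes its point as the barycenter of the $2^n$ children's points, so the whole construction is governed by the single parameter $\alpha$ dictated by the branching number $2^n$.

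On the target side I would first reduce the continuous characteristic of $\vf^*$ to prefix intervals: since $\vf^*$ is non-increasing, a monotonicity lemma of Korenovskii-type lets one compute $[\vf^*]_{A_2}$ using only intervals of the form $[0,t]$ (the symmetry $\vf\leftrightarrow\vf^{-1}$ disposes of the intervals abutting the right endpoint), i.e. as the supremum over $t\in(0,1]$ of $\av{\vf^*}{[0,t]}\,\av{(\vf^*)^{-1}}{[0,t]}$. The crucial point is that $\av{\vf^*}{[0,t]}=\tfrac1t\int_{\{\vf>\vf^*(t)\}}\vf$ is an average over a super-level set of $\vf$, which is a general measurable set and not a dyadic cube; this mismatch is exactly the source of the gap between the dyadic and the continuous characteristics, and it is what the $\alpha$-extension is built to capture.

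With this setup the corollary becomes an instance of the abstract result: the sharp $Q'$ equals the largest value of the product $x_1x_2$ over the $\alpha$-extension $\A{\cF}$ of the free boundary $\cF=\dfree$. The heart of the argument is therefore the explicit computation of $\A{\cF}$ for the hyperbola $x_1x_2=Q$. I expect the extremal trajectories to be power functions (the scale-invariant $A_2$ weights), so that the defining condition for the extension should reduce to a one-parameter barycentric optimization over the $2^n$ children, equivalently a first-order ODE along $\cF$; solving it and maximizing $x_1x_2$ on the resulting curve should produce $\frac{Q(2^n+1)^2-(2^n-1)^2}{2^{n+2}}$. As a structural cross-check, this threshold equals $1+c^2(Q-1)$ with $c=\frac{1+2^n}{2^{1+n/2}}$ the constant of Corollary~\ref{corBMO}, which is precisely what one should see if both corollaries are read off the same $\alpha$-extension.

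Finally I would split the ``if and only if'' into the two inequalities. The bound $[\vf^*]_{A_2}\le Q'$ (the ``if'' direction) follows from the upper estimate of the abstract theorem, namely from the fact that the prefix-average curve of every admissible $\vf$ stays inside $\A{\cF}$. Sharpness (the ``only if'' direction) requires producing, for each value below the threshold, an admissible $\vf\in A^d_{2,Q}$ whose rearrangement exceeds it; these are infinite multi-scale cascades that ride along the free boundary and whose super-level sets pool the large values of many distinct cubes. I expect the main obstacle to be exactly this $\alpha$-extension computation together with the verification that the cascades saturate it: getting the one-step optimization over the $2^n$ children correct, integrating it through all scales, and matching the small-value end to the fixed boundary $\dfi$ so that the extremal product is attained only in the limit and not on any single cube. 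Indeed, a single finite chain or a two-level symmetric configuration can be checked to reach no more than the product $Q$, so the genuine excess up to $Q'$ can only appear after infinitely many scales — which is the delicate part of both the extension and the extremizer construction.
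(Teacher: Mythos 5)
Your reduction is the same as the paper's: identify $A^d_{2,Q}$ with the class $\Class^{\cF}_{\Omega}$ for the lens $\Omega=\{(x_1,x_2)\colon 1\le x_1x_2\le Q\}$ via $\vf\mapsto(\vf,\vf^{-1})$, observe that the dyadic filtration on $[0,1]^n$ is a $2^{-n}$-filtration, and read off the sharp $Q'$ as the maximum of $x_1x_2$ over the minimal $2^{-n}$-extension; your cross-check $Q'=1+c^2(Q-1)$ with $c=\frac{1+2^n}{2^{1+n/2}}$ is also correct. But the two steps that carry all the content are, respectively, missing and based on a false claim. The computation of the minimal $\alpha$-extension is only announced (``should produce''), and your description of it is off: there is no ODE and no power-function trajectory here. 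In the paper (Proposition~\ref{propap2} and Corollary~\ref{A2alphaextension}) the minimal extension is a union of \emph{straight} chords, the ``higher segments'' $[x,y]$ with $y=(y_1,1/y_1)$ on the fixed boundary, $x=(x_1,Q/x_1)$ and $z=\alpha x+(1-\alpha)y$ both on the free boundary; homogeneity reduces this to the quadratic $\alpha a^2-\bigl(Q(1+\alpha)-(1-\alpha)\bigr)a+\alpha Q=0$ for $a=x_1/y_1$ (equation~\eqref{eq2} with $q=-1$), and $Q'$ is the maximum of $x_1x_2$ along such a chord. This algebra is the entire quantitative content of the corollary and cannot be left as an expectation.

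More seriously, your sharpness plan rests on a premise that is wrong. You assert that a single finite chain or two-level configuration ``can be checked to reach no more than the product $Q$,'' so that the excess up to $Q'$ requires infinitely many scales. In fact the paper's extremal example (inside the proof of Theorem~\ref{Th1}, direction $(1)\Rightarrow(2)$) is a \emph{single-scale, three-valued} step function, and it already realizes $Q'$. Concretely for $n=1$, $\alpha=\tfrac12$: take $a=x_1/y_1$ with $a+Q/a=3Q-1$, put $\vf=y_1$ on $[0,\tfrac12]$, and on $[\tfrac12,1]$ let $\vf$ take the two values given by the chord of the hyperbola $x_1x_2=1$ tangent at $x$ to $x_1x_2=Q$, in proportion making $\av{(\vf,\vf^{-1})}{[1/2,1]}=x$. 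Every dyadic average lies in the lens, so the dyadic $A_2$ constant is exactly $Q$; but the prefix averages of $\vf^*$ sweep the segment from $x$ to $z$, on which the product equals $P(s)=(2-2Q)s^2+3(Q-1)s+1$, maximized at $s=\tfrac34\in[\tfrac12,1]$ with value $\tfrac{9Q-1}{8}$, which is precisely $Q'$ for $n=1$. So the excess appears after one step, and believing otherwise would send you searching for (and trying to verify) cascade constructions that are not needed and would not be guided correctly. A lesser issue: your ``Korenovskii-type'' reduction of $[\vf^*]_{A_2}$ to prefix intervals is both unproved and delicate (prefix intervals alone demonstrably do not suffice --- try the two-valued non-increasing function with jump at $\tfrac34$ --- so at minimum suffix intervals and a genuine argument are needed), and it is also redundant in your own scheme: the embedding theorem of~\cite{SZ} invoked through Theorem~\ref{Th1} controls the averages of $\vf^*$ over \emph{all} subintervals, not only prefixes.
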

A similar statement can be obtained for the~$A_p$ class,~$p \ne 2$, but there is no beautiful answer (it includes many solutions of implicit algebraic equations), so we do not dwell on this. An interested reader may calculate the sharp constant herself using Propositions~\ref{propap1} and~\ref{propap2} below.

Another motivation to write (and, we hope, to read) this paper is to demonstrate the strength of the martingale technique developed in~\cite{SZ}. More or less, the proof consists of an accurate manipulation with the definitions, a very simple geometric lemma from~\cite{SV}, and a martingale embedding theorem from~\cite{SZ}. 

In the next section, we state the main theorem in an abstract form (using the terminology from~\cite{5A,SV}) and prove it. The last section consists of the examples of specific classes (in particular, contains the proof of Corollaries~\ref{corBMO},~\ref{corA2}).

\section{Preliminaries and main theorem}
Fix unbounded open strictly convex domains $\Omega_0, \Omega_1 \subset \mathbb{R}^2$ satisfying the following conditions:
\begin{itemize}\label{cond}
\item $\cl\Omega_1\subset \Omega_0$;
\item (cone property) any ray lying inside $\Omega_0$ can be shifted to lay inside $\Omega_1$.
\end{itemize}
Put $\Omega=\cl(\Omega_0\setminus\Omega_1)$. In what follows we will consider only domains of this type and call them lenses. The set $\partial\Omega_0$ is called the fixed boundary of the lens $\Omega$ and is denoted by $\dfi\Omega$. The residual part of the boundary $\partial\Omega_1 = \partial\Omega \setminus \dfi \Omega$ is called the free boundary of $\Omega$ and is denoted by $\dfree\Omega$.

Recall a definition from~\cite{5A}.
\begin{Def}%\noindent{\bf Definition.}
Let $J\subset\R$ be an interval and $\vf \colon J\to\partial\Omega_0$ be a summable function.
We say that the function $\vf$ belongs to the class $\Class_\Omega$ if
$\av\vf{I}\in\Omega$ for every subinterval~$I\subset J$.
\end{Def}

Since the domain $\Omega_0$ is unbounded and strictly convex, there exists at least one straight line $\ell \subset \mathbb{R}^2$ such that the orthogonal projection $P_\ell$ onto this line is injective on $\partial{\Omega_0}$. A function $\vf \in \Class_\Omega$ is called monotone if the composition $P_\ell \circ \vf$ is monotone. The function $\vf^* \colon J \to \partial{\Omega_0}$ is the monotonic rearrangement of $\vf$ if it is monotone (say, non-increasing) and has the same distribution as~$\vf$.

Let $(X,\mathfrak A,\mu)$ be a standard probability space. For any integrable vector-valued function $\vf$ on  $(X,\mathfrak A,\mu)$ and for any subset $\omega \in \mathfrak A$ of positive measure we denote by $\av{\vf}{\omega}$ the average of $\vf$ over $\omega$, that is $\av{\vf}{\omega}=\frac{1}{\mu(\omega)}\int_\omega \vf d\mu$. 
 
We consider increasing discrete time filtrations $\cF=\{\cF_n\}_{n \geq 0}$ with finite algebras $\cF_n$ starting with trivial algebra $\cF_0=\{\varnothing, X\}$ such that~$\mathfrak{A}$ is generated by $\{\cF_n\}_{n \geq 0}$ mod 0. By L\'evy's zero-one law if $f \in L^1(X,\mathfrak A,\mu)$ and $\{F_n\}_{n \geq 0}$ is the martingale generated by $f$ and $\cF$ (i.e. $F_n|_{\omega}=\av{f}{\omega}$ for any atom $\omega$ of the algebra $\cF_n$), then $F_n$ converges to $f$ almost everywhere and in $L^1(X,\mathfrak A,\mu)$. We use the symbol $\tcF$ for the set of all atoms of~$\cF_n$,~$n\geq 0$.
For a fixed filtration
$\cF$ we introduce the class $\Class_\Omega^\cF$ of functions $\vf\colon X\to\partial\Omega_0$
such that the condition~$\av\vf\omega\in\Omega$ holds for every  $\omega\in\tcF$. 

\begin{Def}\label{AlphaExt}%\noindent{\bf Definition.}
The lens $\tilde\Omega$ is called an extension of $\Omega$ if $\Omega \subset \tilde\Omega$ and $\dfi\Omega=\dfi\tilde\Omega$.\footnote{Note that this definition slightly differs from the one given in~\cite{SZ}.}

Let $\alpha\in(0,1)$. The lens $\tilde\Omega$ is called an $\alpha$-extension of $\Omega$ if for every 
pair of points $x,y$ from $\Omega$ such that the point $z=\alpha x+(1-\alpha)y$ together
with the whole straight line segment $[z,y]$ is contained in $\Omega$\textup, we have
$[x,y]\subset\tilde\Omega$. 
\end{Def}
\begin{figure}[h!]
\includegraphics[height=6cm]{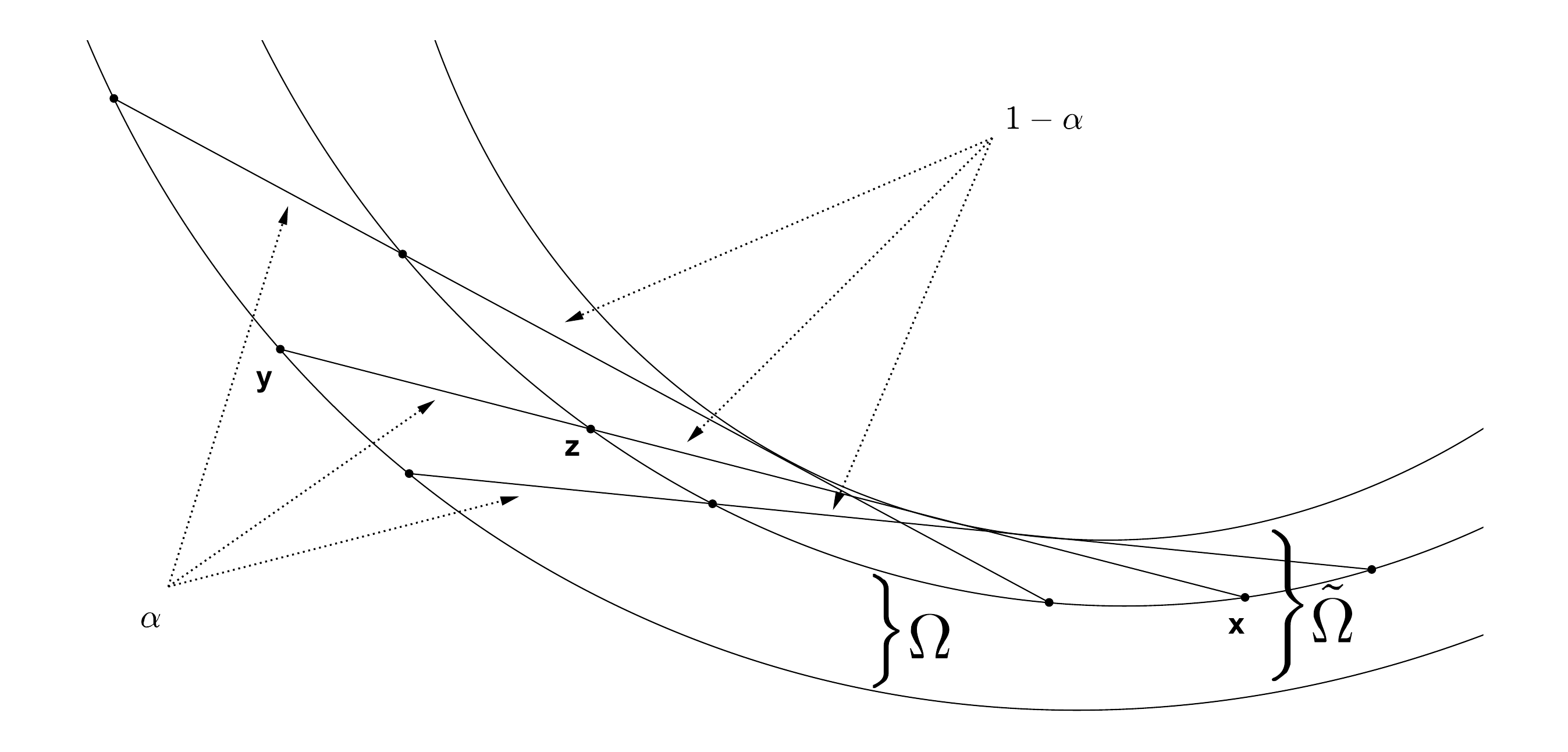}
\caption{\mbox{Illustration to Definition~\ref{AlphaExt}: 
$\tilde{\Omega}$ is an~$\alpha$-extension of~$\Omega$.}}
\label{fig:ill}
\end{figure}

Note that if $z=\beta x+(1-\beta)y$,~$\beta>\alpha$, and $[z,y]$ is contained in $\Omega$, then $[x,y]$ is contained in any 
$\alpha$-extension of $\Omega$, i.e. an $\alpha$-extension is simultaneously
a $\beta$-extension for any $\beta>\alpha$.

%\noindent{\bf Definition.}
\begin{Def}\label{AlphaFiltr}
We say that $\cF$ is an $\alpha$-filtration if for every pair of atoms~$\omega_n \in \cF_n$\textup,~$\omega_{n+1} \in \cF_{n+1}$ such that $\omega_{n+1}\subset\omega_n$ we have 
$\mu(\omega_{n+1})\ge\alpha\mu(\omega_n)$.
 We refer to the pair $(\omega_n,\omega_{n+1})$ as above as a parent and a child.
We say that $\cF$ is a binary filtration if every parent in $\cF$ has at most two children.
\end{Def}

\begin{Def}\label{AlphaMart}%\noindent{\bf Definition.}
Let $\cF$ be a binary filtration and let $\{F_n\}_{n \geq 0}$ be the martingale generated by a function 
$\vf\in\Class_\Omega^\cF$ and $\cF$. We say that $\{F_n\}_{n \geq 0}$ is an $\alpha$-martingale if for all
$\omega\in\tcF$ the following condition is fulfilled\textup: if $\omega',\omega''$ are children of 
$\omega$ and the straight line segment $[\av\vf{\omega''},\av\vf\omega]$ is not contained 
in $\Omega$\textup, then we have $|\av\vf{\omega'}-\av\vf\omega|\ge\alpha|\av\vf{\omega'}-\av\vf{\omega''}|$. 
For an arbitrary filtration $\cF$ 
we say that $\{F_n\}_{n \geq 0}$ is an $\alpha$-martingale if there exists a binary filtration $\tilde\cF = \{\tilde\cF_m\}_{m \geq 0}$\textup, 
such that the martingale $\{\tilde F_m\}_{m \geq 0}$ generated by $\vf$ and $\tilde\cF$ is an $\alpha$-martingale and $F_n=\tilde F_{m_n}$ for some increasing sequence $\{m_n\}_n$.
\end{Def}

In other words, for an $\alpha$-filtration, the condition $\mu(\omega_{n+1})\ge\alpha\mu(\omega_n)$
should be fulfilled for all children of all subsets, whereas for $\alpha$-martingales this
condition is needed only in some cases. This relation is clarified in the simple lemma below.

\begin{Le}
Let $\alpha \in (0,1)$ and  let $\cF$ be an $\alpha$-filtration. Then the martingale generated by any function $\vf\in\Class_\Omega^\cF$ and $\cF$ is an $\alpha$-martingale.
\end{Le}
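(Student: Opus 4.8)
The plan is to reduce everything to the case of a binary filtration, where the statement is an immediate computation, and then to handle a general $\alpha$-filtration by refining it into a binary one.

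First I would treat a binary $\cF$ directly. Fix a parent $\omega\in\tcF$ with children $\omega',\omega''$ (a single child is the trivial case), write $x=\av\vf\omega$, $a=\av\vf{\omega'}$, $b=\av\vf{\omega''}$, and use the martingale identity $x=\lambda a+(1-\lambda)b$ with $\lambda=\mu(\omega')/\mu(\omega)$. Since $\cF$ is an $\alpha$-filtration, both $\lambda$ and $1-\lambda$ lie in $[\alpha,1-\alpha]$, whence $|a-x|=(1-\lambda)|a-b|\ge\alpha|a-b|$ and, symmetrically, $|b-x|=\lambda|a-b|\ge\alpha|a-b|$. Thus the inequality required in Definition~\ref{AlphaMart} holds for either labelling of the two children, and in particular whenever the relevant segment fails to lie in $\Omega$; so the binary case is settled.

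For a general $\alpha$-filtration $\cF$ I would build a binary refinement $\tilde\cF$ by splitting each one-step transition $\cF_n\to\cF_{n+1}$ into elementary binary steps. Fix an atom $\omega\in\cF_n$ with children $\omega_1,\dots,\omega_k\in\cF_{n+1}$ and peel them off one at a time: choose an order $\sigma$ and set $S_1=\omega$, $S_{j+1}=S_j\setminus\omega_{\sigma(j)}$, declaring $S_j$ to have the two children $\omega_{\sigma(j)}$ and $S_{j+1}$. Interleaving these chains over all atoms of $\cF_n$ yields a binary filtration $\tilde\cF$ with $\cF_n=\tilde\cF_{m_n}$ for a suitable increasing sequence $\{m_n\}$, so that $F_n=\tilde F_{m_n}$. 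The measure condition is then automatic for \emph{any} order: the removed child satisfies $\mu(\omega_{\sigma(j)})\ge\alpha\mu(\omega)\ge\alpha\mu(S_j)$, while the remainder $S_{j+1}$ still contains a whole child of $\omega$, so $\mu(S_{j+1})\ge\alpha\mu(\omega)\ge\alpha\mu(S_j)$; hence $\tilde\cF$ is again an $\alpha$-filtration.

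It remains to choose the order $\sigma$ so that $\vf\in\Class_\Omega^{\tilde\cF}$, i.e. so that every intermediate barycenter $\av\vf{S_j}$ stays in $\Omega$; this is the one genuinely geometric point and the main obstacle, since an intermediate barycenter is only a convex combination of \emph{some} of the points $\av\vf{\omega_i}$ and may a priori fall into the hole $\Omega_1$. Because $\av\vf\omega\in\Omega$ is a convex combination of the points $\av\vf{\omega_i}\in\Omega$, I would invoke the simple geometric lemma of~\cite{SV}: from a finite family of points of $\Omega$ whose barycenter lies in $\Omega$ one can always remove one of them so that the barycenter of those that remain again lies in $\Omega$ (an extreme point for the projection $P_\ell$ does the job). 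Iterating this selection produces an order $\sigma$ along which all partial barycenters $\av\vf{S_j}$ belong to $\Omega$. Once both the measure condition and $\vf\in\Class_\Omega^{\tilde\cF}$ are secured, the binary case applies to $\tilde\cF$, so $\tilde F$ is an $\alpha$-martingale, and hence, by Definition~\ref{AlphaMart}, so is $F$.
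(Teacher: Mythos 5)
Your proposal reproduces the paper's own proof: the binary case is settled by the same martingale identity, and the general case by the same peeling construction, with the measure condition verified exactly as you do and with the choice of which child to peel off governed by Lemma~2.3 of~\cite{SV} --- precisely the lemma the paper invokes (and reproves) at the same point.

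The one thing you must repair is the parenthetical claim that ``an extreme point for the projection $P_\ell$ does the job.'' No selection rule of the form ``remove the extreme (or the farthest) point'' is valid, because the weights matter. Counterexample in the parabolic lens $\Omega_\eps=\{(x_1,x_2)\colon x_1^2\le x_2\le x_1^2+\eps^2\}$, where $P_\ell$ is the projection onto the first coordinate: fix a small $\delta>0$, put $d^2=\delta\eps^2/\bigl(\tfrac14+2\delta\bigr)$, and take the points $p_1=(0,\eps^2)$ and $p_2=(d,d^2+\eps^2)$, each with weight $(1-\delta)/2$, together with $p_3=(-d,d^2)$ with weight $\delta$. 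All three points lie in $\Omega_\eps$, and a direct computation shows that their weighted barycenter lies in $\Omega_\eps$ (the defining inequality reduces to $d^2(1+8\delta-9\delta^2)\le 4\delta\eps^2$, which holds with room to spare). The point $p_3$ is extreme for $P_\ell$ and is also the point farthest from the barycenter; yet removing it leaves the barycenter $\bigl(\tfrac d2,\,\eps^2+\tfrac{d^2}2\bigr)$, which lies strictly above the upper parabola, i.e.\ in the hole. Only removal of the \emph{opposite} extreme $p_2$ works here, and reflecting the picture in the vertical axis gives an example where the roles are exchanged; so which point may be removed depends on the entire weighted configuration, and even the weaker assertion that \emph{some} extreme point always works is unproven. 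The correct justification --- the one in the paper and in~\cite{SV} --- is a convexity argument: the barycenters over the $k$ complements $\omega\setminus\omega_i'$ are themselves, with appropriate weights, a convex combination equal to the barycenter over $\omega$; if none of them lay in $\Omega$, all of them would lie in the convex hole $\Omega_1$, forcing the barycenter over $\omega$ into $\Omega_1$, contradicting $\vf\in\Class_\Omega^{\cF}$. Since you cite the lemma from~\cite{SV} anyway, your proof stands once the misleading parenthetical is deleted or replaced by this argument.
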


\begin{proof}
If the given filtration is binary, then the statement is evident. Indeed, for any $w \in \tcF$ we get 
$$
|\av\vf\omega-\av\vf{\omega'}|=\frac{\mu(\omega'')}{\mu(\omega)}|\av\vf{\omega'}-\av\vf{\omega''}|
$$
and
$$
|\av\vf\omega-\av\vf{\omega''}|=\frac{\mu(\omega')}{\mu(\omega)}|\av\vf{\omega'}-\av\vf{\omega''}|
$$
 from the identity 
$\mu(\omega)\av\vf\omega=\mu(\omega')\av\vf{\omega'}+\mu(\omega'')\av\vf{\omega''}$
for the children $\omega',\omega''$ of $\omega$.
By Definition~\ref{AlphaFiltr}, both coefficients are not less than $\alpha$.

So, for an arbitrary $\alpha$-filtration $\cF=\{\cF_n\}_{n \geq 0}$ we need to construct a
binary one, $\tilde\cF$, such that $\cF$ is a subfiltration of $\tilde\cF$. 
We make it by induction. We start with the trivial algebra $\tilde\cF_0=\cF_0$. Suppose that the sequence $\{\tilde\cF_j\}_{j\leq m}$ has already been defined 
and satisfies two properties: for some $n$ we have $\tilde\cF_m \subset \cF_{n+1}$ and $\{\cF_i\}_{i\leq n}$ is a subfiltration of $\{\tilde\cF_j\}_{j\leq m}$;  $\av{\vf}{\omega}\in \Omega$ for any atom $\omega$ of any algebra $\tilde\cF_i$, $i \leq m$. We need to define the next algebra $\tilde\cF_{m+1}$. 
We fix some atom $\omega$ of $\tilde\cF_m$ which is not the atom of $\cF_{n+1}$ and take an
arbitrary atom $\omega'$ of $\cF_{n+1}$, such that $\omega'\subset\omega$ and $\av\vf{\omega''} \in \Omega$ %as a first child
%of $\omega$ such that 
where~$\omega''=\omega\setminus\omega'$. % (which we take as the second child) 
 
%Maximality in the above 
%sentence means that the parent of $\omega'$ is not contained in $\omega$.

The following simple consideration (see Lemma~2.3 from~\cite{SV}) guarantees the 
existence of the desired atom $\omega'$. Let us enumerate all atoms of $\cF_{n+1}$ which are subsets of $\omega$: $\omega'_1,\omega'_2,\ldots,\omega'_k$;
and let $\omega''_j=\omega\setminus\omega'_j$ for $j=1,\dots,k$. Since $\omega=\bigcup_{i=1}^k\omega'_i$
and $\omega'_i\cap\omega'_j=\varnothing$, we have
$$
\mu(\omega)\av\vf\omega=\sum_{i=1}^k\mu(\omega'_i)\av\vf{\omega'_i}=
\frac1{k-1}\sum_{i=1}^k\mu(\omega''_i)\av\vf{\omega''_i}.
$$
All the points $\av{\vf}{\omega''_i}$ belong to $\cl(\Omega_0)$ by convexity of this set. If we assume that all the points $\av\vf{\omega''_i}$ do not belong to $\Omega$,
then all of them are in~$\Omega_1$ (since~$\Omega_1$ is convex). In this case their convex
combination $\av\vf\omega$ should be in $\Omega_1$, however, it is not the case. Thus, $\av{\vf}{\omega''_i} \in \Omega$ for some $i$ and we can take $\omega'=\omega'_i$, $\omega'' = \omega''_i$. 

We now define $\tilde\cF_{m+1}$ by replacing the atom $\omega$ of~$\tilde\cF_m$ by two new atoms $\omega', \omega''$. We have made an induction step. Since the algebra $\cF_{n+1}$ is finite, after a finite number of steps we obtain $\tilde\cF_{m_{n+1}}=\cF_{n+1}$ for some $m_{n+1}$, and after that we continue the induction with $n$ increased by one.

%So, we can take the desired binary splitting $\omega=\omega'\cup\omega''$ for
%every element $\omega$ of $\tilde\cF_m$ being not in $\cF$. After finite number 
%of steps we split all elements not belonging to $\cF$ and since we took always 
%a maximal elements in our splitting we come to equality $\tilde\cF_{m_n}=\cF_n$ 
%for some subsequence $m_n$.
Clearly, the resulting binary filtration $\tilde\cF$ is an $\alpha$-filtration.
\end{proof}

\begin{Def}
We say that a positive number $\alpha$ is admissible for the filtration $\cF$ if there exist $n \geq 0$\textup, an atom $\omega \in \cF_n$\textup, and $w'\in \cF_{n+1}$ such that $\omega' \subset \omega$ and $\mu(\omega')=\alpha \mu(\omega)$.
\end{Def}

\begin{Th}\label{Th1}
Suppose that~$\tilde\Omega$ is an extension of~$\Omega$. For a fixed filtration $\cF$ and an admissible for this filtration number $\alpha$ 
the two assertions below are equivalent.
\begin{enumerate}
\item For every $\vf\in\Class_\Omega^\cF$ such that the martingale $\{F_n\}_{n\geq 0}$ generated by~$\varphi$ and~$\cF$ is an $\alpha$-martingale\textup, the monotonic rearrangement $\vf^*$ belongs to $\Class_{\tilde\Omega}$.
 
\item The domain $\tilde\Omega$ is an $\alpha$-extension of $\Omega$.
\end{enumerate} 
\end{Th}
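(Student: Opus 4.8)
The plan is to prove the two implications separately, treating (2)$\Rightarrow$(1) as the substantive assertion and establishing the converse by an explicit counterexample built on the admissibility of $\alpha$.

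For (2)$\Rightarrow$(1), suppose $\tilde\Omega$ is an $\alpha$-extension and fix an $\alpha$-martingale $\{F_n\}$ generated by some $\vf\in\Class_\Omega^\cF$. First I would reduce to a finite binary $\alpha$-martingale: by Definition~\ref{AlphaMart} the martingale already comes from a binary refinement, and since $F_N\to\vf$ almost everywhere and in $L^1$, the rearrangements $(F_N)^*$ converge to $\vf^*$, so $\av{(F_N)^*}{I}\to\av{\vf^*}{I}$ for every interval $I$; as the lens $\tilde\Omega=\cl(\tilde\Omega_0\setminus\tilde\Omega_1)$ is closed, it suffices to place the interval-averages of the step functions $(F_N)^*$ in $\tilde\Omega$. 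A useful simplification comes next: since $\vf^*$ takes values on $\partial\Omega_0=\dfi\tilde\Omega$ and $\tilde\Omega_0$ is convex, every average lies automatically in $\cl\tilde\Omega_0$, so membership in $\tilde\Omega$ is equivalent to the averages \emph{avoiding the open convex hole} $\tilde\Omega_1$, and this is the only remaining condition to verify.

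The geometric heart of this direction is the elementary extrapolation identity for a monotone function $\psi$,
\begin{equation*}
\av{\psi}{[a,b]}=\frac{1-a}{b-a}\,\av{\psi}{[a,1]}-\frac{1-b}{b-a}\,\av{\psi}{[b,1]},
\end{equation*}
which exhibits $\av{\psi}{[a,b]}$ as the point obtained by extending the chord from $y=\av{\psi}{[b,1]}$ through $z=\av{\psi}{[a,1]}$: writing $x=\av{\psi}{[a,b]}$ we have $z=\lambda x+(1-\lambda)y$ with $\lambda=\tfrac{b-a}{1-a}$, which is exactly the configuration of Definition~\ref{AlphaExt}. I would show that the $\alpha$-martingale condition forces, for the right-oriented averages of the sorted function, the inner chord $[z,y]$ to stay inside $\Omega$ and the ratio $\lambda$ to be at least $\alpha$; then, using that an $\alpha$-extension is also a $\beta$-extension for every $\beta>\alpha$, the $\alpha$-extension property places the whole segment $[x,y]$, and in particular $\av{\psi}{[a,b]}$, into $\tilde\Omega$. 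Organizing this simultaneously over all subintervals is where I would invoke the martingale embedding theorem of~\cite{SZ}, which translates the continuous membership $\vf^*\in\Class_{\tilde\Omega}$ into a martingale statement about $\vf^*$ compatible with the splits of the original $\alpha$-martingale.

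For (1)$\Rightarrow$(2) I would argue by contraposition. If $\tilde\Omega$ is not an $\alpha$-extension, there are $x,y\in\Omega$ with $z=\alpha x+(1-\alpha)y$ and $[z,y]\subset\Omega$, yet $[x,y]\not\subset\tilde\Omega$. Admissibility of $\alpha$ furnishes a parent $\omega$ and a child $\omega'$ with $\mu(\omega')=\alpha\mu(\omega)$; setting $\omega''=\omega\setminus\omega'$, I would build $\vf\in\Class_\Omega^\cF$ with $\av{\vf}{\omega'}=x$, $\av{\vf}{\omega''}=y$, so that $\av{\vf}{\omega}=z$. Realizing the interior values $x,y$ by $\partial\Omega_0$-valued sub-martingales inside $\omega',\omega''$ (again through the embedding theorem of~\cite{SZ}) and keeping the values on $X\setminus\omega$ extreme in the $P_\ell$-direction so that the $\omega$-block remains contiguous after sorting, one checks the $\alpha$-martingale condition at $\omega$: since $[z,y]\subset\Omega$ the constraint is vacuous for the chord to $\omega''$, while the chord to $\omega'$ triggers the requirement $|\av{\vf}{\omega''}-\av{\vf}{\omega}|\ge\alpha|\av{\vf}{\omega''}-\av{\vf}{\omega'}|$, which holds with equality because $z-y=\alpha(x-y)$. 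Finally, the averages of $\vf^*$ over subintervals of the $\omega$-block fill the whole segment $[x,y]$, which is not contained in $\tilde\Omega$; hence $\vf^*\notin\Class_{\tilde\Omega}$ and (1) fails.

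The main obstacle I expect is in (2)$\Rightarrow$(1): making rigorous the correspondence between the interval decompositions produced by the monotone rearrangement and the parent--child splits of the $\alpha$-martingale, so that the extrapolation identity is applied with ratio genuinely controlled by $\alpha$ and with the inner chord $[z,y]$ genuinely inside $\Omega$ rather than merely inside $\tilde\Omega$ (the latter would be circular). By comparison, the bookkeeping in the counterexample—isolating the $\omega$-block under sorting and realizing the interior averages $x,y$ through admissible boundary-valued martingales—is routine, though it, too, relies on the embedding theorem of~\cite{SZ} being invoked with care.
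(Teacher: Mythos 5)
Your overall architecture matches the paper's (reduce to a binary martingale, invoke Theorem~3.4 of~\cite{SZ} for the forward direction; build an explicit counterexample from admissibility for the converse), but both directions have genuine gaps at exactly the points where the paper does its real work.

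In $(2)\Rightarrow(1)$, your plan applies the extrapolation identity directly to interval averages of the sorted function $\vf^*$, and the claim you say you ``would show'' --- that the $\alpha$-martingale condition forces the inner chord $[z,y]$ into $\Omega$ and the ratio $\lambda=\frac{b-a}{1-a}$ to be at least $\alpha$ --- is not just unproved, it is false as stated: $\lambda$ can be made arbitrarily small by shrinking $b-a$, and tail averages $\av{\vf^*}{[a,1]}$ are averages of $\vf$ over super-level sets of $P_\ell\circ\vf$, not over atoms of $\cF$, so the $\alpha$-martingale condition says nothing about them. You correctly flag this as ``the main obstacle,'' but flagging it does not close it. The paper sidesteps this entirely: the $\alpha$-extension hypothesis is applied \emph{at the level of the martingale}, not of $\vf^*$. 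For an atom $\omega$ of the binary refinement with children $\omega',\omega''$, put $x=\av\vf{\omega'}$, $y=\av\vf{\omega''}$, $z=\av\vf{\omega}\in[x,y]$; since $\Omega_1$ is convex and $z\notin\Omega_1$, at most one of $[z,x]$, $[z,y]$ can leave $\Omega$, and the $\alpha$-martingale condition then supplies precisely the weight needed to invoke the $\alpha$-extension property (using that an $\alpha$-extension is a $\beta$-extension for $\beta>\alpha$), giving $[x,y]\subset\tilde\Omega$. Thus the martingale is an $\tilde\Omega$-martingale, and Theorem~3.4 of~\cite{SZ} is then quoted in the correct direction: from the martingale property of $\{\tilde F_m\}$ to the conclusion $\vf^*\in\Class_{\tilde\Omega}$. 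All of the interval bookkeeping you were trying to do by hand lives inside that cited theorem.

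In $(1)\Rightarrow(2)$, your verification of the $\alpha$-martingale condition at $\omega$ is correct (and is actually more explicit than the paper's), but the final step fails: if you realize \emph{both} interior points $x$ and $y$ by nonconstant $\partial\Omega_0$-valued functions on $\omega'$ and $\omega''$, the monotonic rearrangement interleaves their values inside the $\omega$-block, and there is no reason the averages of $\vf^*$ over subintervals of that block trace the segment $[x,y]$; keeping the values on $X\setminus\omega$ extreme only makes the block contiguous, not internally ordered. The paper's construction is rigged to avoid exactly this: first, one may assume $y\in\dfi\Omega$ (shift $y$ and $z$ along the line through $x,y$ until $y$ hits the fixed boundary, preserving $[y',z']\subset\Omega$ and $[x,y']\not\subset\tilde\Omega$), so $\vf\equiv y$ on $\omega'\cup(X\setminus\omega)$ is a legitimate boundary-valued constant; second, $x$ is realized by two values $a,b\in\dfi\Omega$ with $x\in[a,b]\subset\Omega$ and with $a$ lying on the arc between $y$ and $b$. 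Then $\vf^*$ is a three-step function with values $b,a,y$ in that order, $\av{\vf^*}{[0,\mu(\omega'')]}=x$, and $\av{\vf^*}{[0,t]}$ sweeps through all of $[x,z]$ as $t$ runs over $[\mu(\omega''),1]$, hitting $z$ at $t=\mu(\omega)$; since $[x,z]\not\subset\tilde\Omega$, this finishes the proof. Without the normalization $y\in\dfi\Omega$ and the ordering of $a$, this last computation --- the only place where non-membership $\vf^*\notin\Class_{\tilde\Omega}$ is actually exhibited --- does not go through.
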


\begin{proof}
First we prove the implication $(2)\implies(1)$. Let $\vf\in\Class_\Omega^\cF$ and let $\{F_n\}_{n\geq 0}$ be the $\alpha$-martingale
generated by $\vf$ and~$\cF$. Let $\{\tilde F_m\}$ be a binary $\alpha$-martingale such that $\tilde F_{m_n}=F_n$.
By Definition~\ref{AlphaExt}, for any $\omega\in \A{\tilde\cF}$ and its children
$\omega',\omega''$, the whole segment $[\av\vf{\omega'},\av\vf{\omega''}]$ is in $\tilde\Omega$.\footnote{In the terminology of~\cite{SZ}, the martingale $\{\tilde F_m\}$ is an $\tilde\Omega$-martingale.} 
By Theorem~3.4 from~\cite{SZ} the  monotonic rearrangement of $\tilde F_\infty = \lim \tilde F_m$ 
(that is the function $\vf^*$) belongs to $\Class_{\tilde\Omega}$. The implication is proved. 

Assume (2) is not fulfilled. Then we can find $x,y,z \in\Omega$ such that 
$z=\alpha x+(1-\alpha)y$ with $[y,z]\subset\Omega$, but $[x,y]\not\subset\tilde\Omega$, i.e. 
$[x,z]\not\subset\tilde\Omega$. Without loss of generality we can suppose that $y\in\dfi\Omega$.
Indeed, if $y\notin\dfi\Omega$, we can shift the points $y$ and $z$ to the new positions $y'$ and $z'$
along the line containing the segment $[x,y]$ so that $y'\in\dfi\Omega$, $z'=\alpha x+(1-\alpha)y'$,
and $[y',z']\subset\Omega$, but $[x,y']\not\subset\tilde\Omega$. Take two arbitrary points $a$ and $b$ on the boundary $\dfi\Omega$
such that $x\in[a,b]\subset\Omega$. Since the part of $\Omega$ between the chord $[a,b]$ and the 
corresponding arc of the boundary $\dfi\Omega$ is a convex set, the point $y$ cannot belong
to this arc. Therefore, by the letter $a$ we can  call the endpoint of the arc that is between the
points $y$ and $b$ (see Picture~\ref{fig:mart}).
\begin{figure}[h!]
\includegraphics[height=6cm]{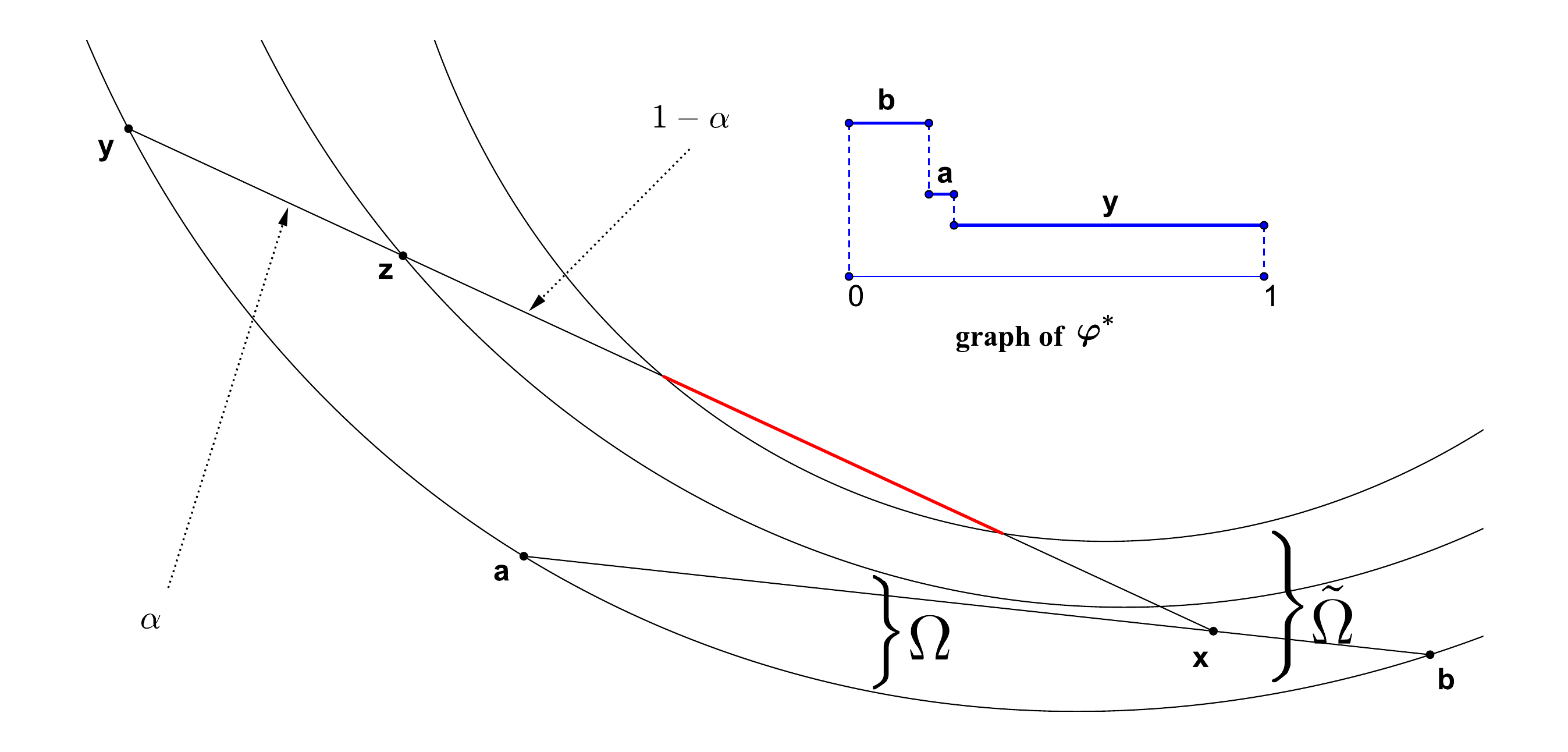}
\caption{Construction of~$\varphi$.}
\label{fig:mart}
\end{figure}

Now we take a subset $\omega\in\A{\cF}$ such that $\frac{\mu(\omega')}{\mu(\omega)}=1-\alpha$,
where $\omega'$ is a union of several children of $\omega$. Such $\omega$ and $\omega'$ do
exist because $\alpha$ is admissible for $\cF$. Define the function $\vf$ on $X$ in
the way described below. We put $\vf=y$ on $\omega'\cup(X\setminus\omega)$ and on the remaining
part $\omega''=\omega\setminus\omega'$ our function $\vf$ takes only values 
$a$ and $b$ in such a proportion that $\av\vf{\omega''}=x$. %This is possible, since by our 
%assumption $\cF$ differentiates $X$. 

Let us check that $\vf\in\Class_\Omega^\cF$. Let $\omega_1 \in \A\cF$. If $\omega_1 \cap \omega = \varnothing$, then $\av{\vf}{\omega_1}=y \in \Omega$. The average over $\omega$ is
$$
\av{\vf}{\omega}= \frac{\mu(\omega')}{\mu(\omega)}\av{\vf}{\omega'}+\frac{\mu(\omega'')}{\mu(\omega)}\av{\vf}{\omega''}= (1-\alpha) y+ \alpha x = z.
$$ 
If $ \omega_1 \supset \omega$, then 
$$
\av{\vf}{\omega_1} =\frac{\mu(\omega)}{\mu(\omega_1)}z+\frac{\mu(\omega_1)-\mu(\omega)}{\mu(\omega_1)}y \in [y,z] \subset \Omega.
$$ 
If $\omega_1 \subset \omega$, then either $\omega_1 \subset \omega'$ and $\av{\vf}{\omega_1}=y \in \Omega$ or $\omega_1 \subset \omega''$ and $\av{\vf}{\omega_1}\in[a,b]\subset \Omega$.

Let us check that $\vf^*\notin\Class_{\tilde\Omega}$. Without loss of generality we may assume that $\vf^*$ is defined on 
$[0,1]$ and it is a step function taking three values 
$b$, $a$, and $y$. For definiteness we assume that $\vf^*(0)=b$. 
Then $\av{\vf^*}{[0,\mu(\omega'')]}=x$ and the
value $\av{\vf^*}{[0,t]}$ runs through the whole segment $[x,z]$ when $t$ runs through 
$[\mu(\omega''),1]$. Since by our assumption $[x,z]\not\subset\tilde\Omega$, we
conclude that $\vf^*\notin\Class_{\tilde\Omega}$. Therefore, $(1)\implies(2)$.
\end{proof}

\section{Examples}

In this section we consider several examples of $\alpha$-extensions of lenses which correspond to famous classes of functions.

In order to be an $\alpha$-extension of~$\Omega$, $\tilde\Omega$ should contain all segments $[x,y]$ such that $x,y \in \Omega$ and $[z, y] \subset \Omega$, where $z=\alpha x + (1-\alpha)y$. It is almost obvious that this property is satisfied if and only if it is fulfilled for such segments with $y \in \dfi\Omega$ and $x,z \in \dfree\Omega$ (in what follows we will call such segments \textit{higher}). Therefore, the construction of the minimal $\alpha$-extension is quite simple with $\alpha$ and $\Omega$ at hand.

\subsection{$\BMO$  space}
It is well known that the lenses 
$$\Omega_\eps = \{(x_1,x_2)\in \mathbb{R}^2\colon x_1^2\leq x_2\leq x_1^2+\eps^2\}, \quad \eps>0,$$ correspond to the $\BMO$ space. One can easily check that a function $\vf$ lies in the $\BMO$ space on some interval and has the quadratic seminorm at most $\eps$ if and only if the function $(\vf,\vf^2)$ belongs to $\Class_{\Omega_\eps}$.

\begin{Prop}\label{propbmo}
Let $\eps, \alpha>0$. The lens $\tilde\Omega$ is an $\alpha$-extension of $\Omega_\eps$ if and only if $\tilde\Omega \supset \Omega_{\eps'}$ with $\eps' = \frac{1+\alpha}{2\sqrt{\alpha}}\eps$.
\end{Prop}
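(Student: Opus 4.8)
The plan is to determine the minimal $\alpha$-extension of $\Omega_\eps$ explicitly and to show that it coincides with $\Omega_{\eps'}$. First I would record the geometry: the fixed boundary $\dfi\Omega_\eps$ is the lower parabola $x_2 = x_1^2$ and the free boundary $\dfree\Omega_\eps$ is the upper parabola $x_2 = x_1^2 + \eps^2$. By the reduction noted at the beginning of this section, $\tilde\Omega$ is an $\alpha$-extension of $\Omega_\eps$ if and only if it contains every \emph{higher} segment $[x,y]$ with $y \in \dfi\Omega_\eps$ and $x,z \in \dfree\Omega_\eps$, where $z = \alpha x + (1-\alpha)y$. Since the remaining part $[z,y]$ of such a segment already lies in $\Omega_\eps \subset \tilde\Omega$, the only binding requirement is that $\tilde\Omega$ contain the chord $[x,z]$ of the free-boundary parabola. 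Thus the minimal $\alpha$-extension is obtained by adjoining to $\Omega_\eps$ the union of all these chords, and I would show that this union fills exactly the region between $x_2 = x_1^2 + \eps^2$ and $x_2 = x_1^2 + (\eps')^2$.

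Next I would parametrize $y = (t, t^2)$ and $x = (s, s^2 + \eps^2)$ and impose that $z = \alpha x + (1-\alpha)y$ also lie on the free boundary. Equating the two expressions for the second coordinate of $z$, namely $\alpha(s^2+\eps^2) + (1-\alpha)t^2$ and $(\alpha s + (1-\alpha)t)^2 + \eps^2$, the cross terms collapse after dividing by $1-\alpha$ to the clean relation $\alpha(s-t)^2 = \eps^2$, i.e. $|s-t| = \eps/\sqrt{\alpha}$. The key observation is then that the horizontal extent of the chord $[x,z]$ equals $|s - z_1| = (1-\alpha)|s-t| = (1-\alpha)\eps/\sqrt{\alpha} =: L$, a quantity \emph{independent of $s$}. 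Hence the higher chords are precisely the chords of the free-boundary parabola of fixed horizontal width $L$, positioned arbitrarily as $s$ ranges over $\R$ (one checks along the way that $[z,y]\subset\Omega_\eps$ for every such configuration, so each is a genuine higher segment).

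It then remains to compute the upper envelope of all chords of a parabola having fixed horizontal extent $L$. For endpoints at abscissas $p$ and $q = p + L$, the chord lies above the parabola by exactly $\lambda(1-\lambda)(p-q)^2 = \lambda(1-\lambda)L^2$ at the interior point of parameter $\lambda$; at a fixed abscissa this vertical gap ranges over the whole interval $[0, L^2/4]$ as the chord is translated, the maximum $L^2/4$ occurring at the midpoint. Consequently the union of these chords is exactly the closed region lying vertically between $x_2 = x_1^2 + \eps^2$ and $x_2 = x_1^2 + \eps^2 + L^2/4$. A short computation gives $\eps^2 + L^2/4 = \eps^2\,\frac{4\alpha + (1-\alpha)^2}{4\alpha} = \eps^2\,\frac{(1+\alpha)^2}{4\alpha} = (\eps')^2$, so this region, adjoined to $\Omega_\eps$, is precisely $\Omega_{\eps'}$.

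Finally I would assemble the two implications. If $\tilde\Omega \supset \Omega_{\eps'}$, then $\tilde\Omega$ contains all higher chords and hence, by the reduction, is an $\alpha$-extension. Conversely, any $\alpha$-extension contains every higher chord, whose union together with $\Omega_\eps$ is all of $\Omega_{\eps'}$, forcing $\tilde\Omega \supset \Omega_{\eps'}$. I expect the main obstacle to be the second step: spotting that the free-boundary constraint forces \emph{all} higher chords to share the same horizontal width $L$, which is what reduces an a priori two-parameter family of segments to the classical one-parameter envelope computation. The algebraic simplification to $\alpha(s-t)^2 = \eps^2$ is the crux that makes this visible.
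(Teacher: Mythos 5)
Your proof is correct, but it takes a genuinely different route from the paper's. The paper computes only the single \emph{horizontal} higher segment: writing $y=(y_1,y_1^2)$, $x=(x_1,y_1^2)$, $z=(z_1,y_1^2)$ with $z_1=-x_1$, it finds $y_1^2=\frac{(1+\alpha)^2}{4\alpha}\eps^2=\eps'^2$, so the apex $(0,\eps'^2)$ must lie in $\tilde\Omega$; it then invokes the invariance of $\Omega_\eps$ under the shear maps $\mathrm{Aff}_t\colon (u_1,u_2)\mapsto(u_1+t,u_2+2u_1t+t^2)$ to conclude both that any $\alpha$-extension must contain the whole curve $\{(t,t^2+\eps'^2)\}$ (hence $\Omega_{\eps'}$) and, conversely, that $\Omega_{\eps'}$ contains \emph{all} higher segments, since each is the image of the horizontal one under some $\mathrm{Aff}_t$. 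You instead parametrize the full two-parameter family of higher segments, reduce the free-boundary constraint to $\alpha(s-t)^2=\eps^2$, observe that every higher chord of the upper parabola has the same horizontal width $L=(1-\alpha)\eps/\sqrt{\alpha}$, and compute the union of this one-parameter family of chords explicitly, getting the band of height $L^2/4=(\eps')^2-\eps^2$ above the free boundary. The two arguments are two faces of the same fact: your constant-width observation is exactly what the shear invariance predicts, since $\mathrm{Aff}_t$ preserves horizontal distances. What the paper's symmetry argument buys is brevity and reusability --- the same template (compute one extremal segment, transport by the invariance group) is what drives Propositions \ref{propap1} and \ref{propap2} for the lenses $\Omega_C^q$ under multiplicative scalings; what your computation buys is a self-contained, elementary derivation that exhibits the minimal $\alpha$-extension concretely as $\Omega_\eps$ together with the union of all fixed-width chords, without needing to spot the invariance in advance. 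Two small points you wave at ("one checks") deserve a line each if written up: that $[z,y]\subset\Omega_\eps$ for every configuration in your family (the line through $x,y$ meets the upper parabola only at $x$ and $z$, and $z_1$ lies between $s$ and $t$, so the segment past $z$ stays below the free boundary, while convexity of $\{x_2\ge x_1^2\}$ keeps it above the fixed one), and that every chord of width $L$ actually arises from an admissible pair $(s,t)$, which your parametrization does show.
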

\begin{proof} 
%In order to be an $\alpha$-extension of the lens $\Omega_\eps$ the lens $\tilde\Omega$ should contain all segments $[x,y]$ such that $x,y \in \Omega_\eps$ and $[z, y] \subset \Omega_\eps$, where $z=\alpha x+(1-\alpha)y$. It is almost obvious that this property is fulfilled if and only if it is fulfilled for $y \in \dfi\Omega_\eps$ and $x,z \in \dfree\Omega$. 

Assume first that $y \in \dfi\Omega_\eps$, $x,z \in \dfree\Omega_\eps$, where $z=\alpha x+(1-\alpha)y$, $[z,y]\subset \Omega_\eps$ and the higher segment $[x,y]$ is horizontal. In such a case $y=(y_1,y_1^2)$, $z=(z_1,y_1^2)$, $x=(x_1,y_1^2)$. Then $z_1=-x_1$ and $z_1^2+\eps^2 = y_1^2$. But $y_1=\frac{z_1-\alpha x_1}{1-\alpha}=\frac{1+\alpha}{1-\alpha}z_1$, therefore, $z_1^2 \frac{(1+\alpha)^2}{(1-\alpha)^2} = z_1^2+\eps^2$, $z_1^2 = \frac{(1-\alpha)^2}{4\alpha}\eps^2$, and $y_1^2 = \frac{(1+\alpha)^2}{4\alpha}\eps^2 = \eps'^2$. Therefore, the point $(0,\eps'^2)$ lies on~$[x,y]$ and is contained in $\tilde\Omega$. Moreover,~$[x,y] \subset \Omega_{\eps'}$.

The lens $\Omega_\eps$ is invariant under the family of affine transformations $\mathrm{Aff}_t\colon (u_1,u_2) \mapsto (u_1+t,u_2+2u_1t+t^2)$, $t\in \mathbb{R}$, therefore the domain $\tilde\Omega$ should contain all the images of the point $(0,\eps'^2)$ under these maps, that is $\{(t,t^2+\eps'^2)\}_{t\in\mathbb{R}}$. Thus $\Omega_{\eps'}\subset\tilde\Omega$. Moreover,~$\Omega_{\eps'}$ is invariant under these maps as well and contains the horizontal higher segment, thus it contains all the higher segments, because they are nothing but the images of the corresponding horizontal higher segment under these affine maps. This proves that $\Omega_{\eps'}$ is an $\alpha$-extension of $\Omega_\eps$.
\end{proof}

\paragraph{\bf Proof of Corollary~\ref{corBMO}.}
Consider the filtration $\cF= \{\cF_k\}_{k\geq 0}$ on the probability space $[0,1]^n$, where $\cF_k = \{I \in \mathcal{D}\colon |I|=2^{-nk}\}$. First, note that this is $2^{-n}$-filtration. Second, note that $\|\vf\|_{\bmody}\leq \eps$ if and only if $(\vf,\vf^2)\in \Class_{\Omega_\eps}^\cF$. Theorem~\ref{Th1} states that the monotonic rearrangement operator acts from $\Class_{\Omega_\eps}^\cF$ to $\Class_{\tilde\Omega}$ if and only if $\tilde\Omega$ is an $2^{-n}$-extension of $\Omega_\eps$, which, by Proposition~\ref{propbmo}, holds exactly when $\tilde\Omega \supset \Omega_{\eps'}$ with $\eps' = \frac{1+2^n}{2^{1+n/2}} \eps$.\qed

\subsection{$A_{p_1,p_2}$ classes}

Consider the lenses 
$$\Omega_C^q = \{(x_1,x_2)\colon x_1,x_2>0, \, x_1^{q}\leq x_2 \leq C x_1^q\},\quad q\in\mathbb{R}\setminus\{0\},\;C>1.$$
These lenses are closely related to the so-called $A_{p_1,p_2}$ classes (see~\cite{Vasyunin3}) as we will see later.

We are going to find the minimal $\alpha$-extension of the lens $\Omega_C^q$. In what follows we assume $\dfi\tilde\Omega=\dfi\Omega_C^q$. We consider several cases.
\begin{Prop}\label{propap1}
Let $q>1$. Suppose that $\alpha>1-C^{\frac{-1}{q-1}}$. Then~$\tilde\Omega$ is an $\alpha$-extension of~$\Omega_C^q$ if and only if $\tilde\Omega \supset \Omega_{C'}^q$ with
\begin{equation}\label{eq3}
C' = \frac{(1-Ca^q)^q(q-1)^{q-1}}{(1-a)(a-Ca^q)^{q-1}q^q},
\end{equation}
where $a$ is the smallest of two roots of the  equation
\begin{equation}\label{eq2}
C(\alpha a+(1-\alpha))^{q}=\alpha C a^{q}+(1-\alpha).
\end{equation}
 If $\alpha\leq 1-C^{\frac{-1}{q-1}}$\textup, then the set $\{(x_1,x_2)\colon x_1>0, x_2\geq x_1^q\}$ is the minimal $\alpha$-extension of~$\Omega_C^q$.
\end{Prop}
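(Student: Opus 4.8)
The plan is to follow the template of the proof of Proposition~\ref{propbmo}: reduce to \emph{higher} segments, exploit the symmetry of the lens to normalize one endpoint, find the extremal segment by a one-variable optimization, and read off $C'$ from the highest point of the resulting bulge.

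First I would record the symmetry. The curves $x_2=x_1^q$ and $x_2=Cx_1^q$ (hence the lens $\Omega_C^q$, its fixed boundary $\dfi\Omega_C^q$ and free boundary $\dfree\Omega_C^q$) are invariant under the one-parameter group of scalings $\Lambda_\lambda\colon(u_1,u_2)\mapsto(\lambda u_1,\lambda^q u_2)$, $\lambda>0$, which moreover preserve the ratio $u_2/u_1^q$. By the reduction stated at the beginning of this section, $\tilde\Omega$ is an $\alpha$-extension of $\Omega_C^q$ if and only if it contains every higher segment, i.e. a segment $[x,y]$ with $y\in\dfi\Omega_C^q$, $x,z\in\dfree\Omega_C^q$, $z=\alpha x+(1-\alpha)y$, and $[z,y]\subset\Omega_C^q$. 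Using $\Lambda_\lambda$ I would normalize the fixed endpoint to $y=(1,1)$. Writing $x=(a,Ca^q)$ and imposing that $z=\alpha x+(1-\alpha)y$ also lie on the free boundary $x_2=Cx_1^q$ gives precisely equation~\eqref{eq2} for $a$. A sign analysis of $f(a)=C(\alpha a+(1-\alpha))^q-\alpha Ca^q-(1-\alpha)$ shows $f$ is increasing on $(0,1)$ and decreasing on $(1,\infty)$ with $f(1)=(1-\alpha)(C-1)>0$, so \eqref{eq2} has two positive roots; the hypothesis $\alpha>1-C^{-1/(q-1)}$ is exactly the condition $f(0)<0$ placing the smaller root in $(0,1)$, and since $f(C^{-1/(q-1)})>0$ one in fact gets $a<C^{-1/(q-1)}$, so $Ca^q<a<1$.

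Next I would locate the bulge. Parametrizing $[x,y]$ by $p(t)=(1-t)x+ty$ and maximizing $g(t)=p_2(t)/p_1(t)^q$ over $t\in[0,1]$, the stationarity condition $N'D=qND'$ (with $N=p_2$, $D=p_1$) is linear in $t$ and yields a unique critical $x_1$-coordinate $D^*=\dfrac{q(a-Ca^q)}{(1-Ca^q)(q-1)}$. Substituting back gives $C'=g(t^*)=\dfrac{1-Ca^q}{q(1-a)\,D^{*\,q-1}}$, which simplifies to formula~\eqref{eq3}. For the smaller root this critical point is an interior maximum, so the highest point of the normalized higher segment lies exactly on $x_2=C'x_1^q$ with $C'>C$; for the larger root the analogous critical point is a minimum (the maximum of $g$ staying at the endpoint value $C$), so it contributes no extension and is discarded. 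Applying $\Lambda_\lambda$ then shows every higher segment is a scaled copy lying in $\Omega_{C'}^q$ and touching its free boundary, whence $\Omega_{C'}^q$ is the minimal $\alpha$-extension.

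Finally I would treat the degenerate regime $\alpha\le 1-C^{-1/(q-1)}$. Here $f(0)\ge0$, so no root lies in $(0,1)$; taking $x=(a,Ca^q)$ with $a\to0^+$ one checks that $z=\alpha x+(1-\alpha)y$ stays strictly inside $\Omega_C^q$ and $[z,y]\subset\Omega_C^q$ (this uses $1-\alpha\ge C^{-1/(q-1)}$), while the supremum of $g$ along $[x,y]$ tends to $+\infty$ as $a\to0$ (for $t$ of order $a$ the ratio is of order $a^{-(q-1)}$). Thus the bulges reach arbitrarily high, forcing the minimal $\alpha$-extension to be the whole region $\{x_1>0,\ x_2\ge x_1^q\}$. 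The main obstacle I anticipate is the two-root bookkeeping—justifying that the smaller root gives a genuine maximal bulge while the larger one gives only a minimum—together with the care needed to verify $[z,y]\subset\Omega_C^q$ in the degenerate construction, where the region below the convex free boundary is nonconvex; the algebraic reduction of $g(t^*)$ to the closed form~\eqref{eq3} is lengthy but routine.
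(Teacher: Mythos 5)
Your overall strategy is the same as the paper's: reduce to higher segments, normalize via the scaling invariance $(u_1,u_2)\mapsto(\lambda u_1,\lambda^q u_2)$, and obtain $C'$ as the maximum of $g=p_2/p_1^q$ along a normalized higher segment. Your algebra is correct (the critical abscissa $D^*$ and its reduction to~\eqref{eq3} check out), your root-counting for~\eqref{eq2} is correct, and your treatment of the degenerate case $\alpha\le 1-C^{-1/(q-1)}$ is sound. However, there is a genuine error in your handling of the two roots. You claim that for the larger root $a_+>1$ the critical point of $g$ is a minimum, the maximum of $g$ along the segment stays at the endpoint value $C$, and so this family of higher segments ``contributes no extension.'' This is false: for \emph{any} higher segment, the points $x$ and $z$ are two distinct points on the strictly convex free boundary $x_2=Cx_1^q$, so the open subsegment between them lies strictly above that curve; hence $g>C$ there, and $g$ attains an interior maximum strictly larger than $C$ --- for either root. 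Concretely, take $q=2$, $C=2$, $\alpha=0.9$ (so $\alpha>1-C^{-1/(q-1)}=1/2$). Equation~\eqref{eq2} reduces to $(a-1)^2=\tfrac{1}{2\alpha}$, with roots $a_{\pm}=1\pm\sqrt{5}/3$. For $a_+\approx 1.745$, the segment from $x=(a_+,2a_+^2)$ to $y=(1,1)$ is a genuine higher segment (one checks directly that $[z,y]\subset\Omega_2^2$), and the maximum of $p_2/p_1^2$ along it is attained at an interior point and equals approximately $2.001>2=C$; it is given by the very formula~\eqref{eq3} evaluated at $a_+$.

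Consequently both families of higher segments force bulges in any $\alpha$-extension, and the proposition's formula (with the \emph{smallest} root) is true only because the bulge produced by $a_+$ is dominated by the one produced by $a_-$: in the example above, $C'(a_+)\approx 2.001$ while $C'(a_-)\approx 2.033$. This comparison --- that~\eqref{eq3} evaluated at the smaller root is at least its value at the larger root --- is exactly what is missing from your argument, and it is needed in \emph{both} directions of the equivalence: in the ``only if'' direction, to identify $C'$ as the supremum of the maxima over all higher segments; in the ``if'' direction, to verify that $\Omega_{C'}^q$ really contains the higher segments coming from $a_+$ (you assert ``every higher segment lies in $\Omega_{C'}^q$,'' but your stated reason for the $a_+$-segments is the false ``no bulge'' claim). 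The paper's own proof is admittedly terse on this point, but it does not assert the false statement; it computes the maximum of $t_2t_1^{-q}$ on the higher segments and takes the value at the smallest root. To repair your proof, delete the minimum/maximum dichotomy and instead prove the inequality $C'(a_-)\ge C'(a_+)$, e.g.\ by analyzing the function on the right-hand side of~\eqref{eq3} along the two branches of roots of~\eqref{eq2}, or by a direct geometric comparison of the two bulges.
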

\begin{proof} 
First, we note that a segment $[x,y]$ is the higher one if and only if 
\begin{equation}\label{eq1}
C(\alpha x_1+(1-\alpha)y_1)^{q}=\alpha C x_1^{q}+(1-\alpha)y_1^{q},
\end{equation}
 where $x=(x_1,C x_1^{q}) \in \dfree \Omega_C^q$, $y=(y_1, y_1^{q}) \in \dfi\Omega_C^q$ and 
$z=\alpha x+(1-\alpha)y \in \dfree \Omega_C^q$. 
Equation~\eqref{eq1} is homogeneous. Thus, $a = \frac{x_1}{y_1}$ satisfies equation~\eqref{eq2}. 

If $\alpha > 1-C^{\frac{-1}{q-1}}$, then equation~\eqref{eq2} has exactly two positive roots, the first is bigger than 1, and the second is smaller than 1. These two roots correspond to the higher segments with $x_1>y_1$ and $x_1<y_1$. If $\alpha \leq 1-C^{\frac{-1}{q-1}}$, then equation~\eqref{eq2} has only one root which is bigger than one. This means that there are no higher segments with $x_1<y_1$. In such a case any $\alpha$-extension of $\Omega_C^q$ should contain the union of all segments $[x,y]$ such that $x_1<y_1$ and $x \in \Omega_C^q$ and $[y,z] \subset \Omega_C^q$ for $z = \alpha x+(1-\alpha)y$, which coincides with the set $\{(x_1,x_2)\colon x_1>0, x_2\geq x_1^q\}$.
 
Let now $\alpha> 1-C^{\frac{-1}{q-1}}$. The lens $\Omega_C^q$ is invariant under the family of affine transformations $\mathrm{Aff}_t\colon (u_1,u_2) \mapsto (t u_1,t^{q}u_2)$, $t>0$, each of which preserves the property of a segment to be higher. Thus, the minimal $\alpha$-extension of $\Omega_C^q$ should be invariant under these transformations as well and therefore, should coincide with $\Omega_{C'}^q$ for some $C'>C$. For each higher segment $[x,y]$ we find the point $(t_1,t_2)$ on it, such that $t_2t_1^{-q}$ is maximal. This maximal value is exactly $C'$ defined by~\eqref{eq3}. 
\end{proof}

Arguing in the same way one can obtain the following proposition.
\begin{Prop}\label{propap2}
Let $q\leq -1$. The lens $\tilde\Omega$ is an $\alpha$-extension of $\Omega_C^q$ if and only if $\tilde\Omega \supset \Omega_{C'}^q$ with 
\begin{equation}\label{eq3b}
C' = \frac{(a-Ca^q)^{1-q}(-q)^{-q}}{(a-1)(1-Ca^q)^{-q}(1-q)^{1-q}},
\end{equation}
where $a$ is the bigger of two roots of equation~\eqref{eq2}.
\end{Prop}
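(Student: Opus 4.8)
The plan is to follow the proof of Proposition~\ref{propap1} line by line, tracking the change of sign of the exponent. As there, I first describe the higher segments: a segment $[x,y]$ with $x=(x_1,Cx_1^q)\in\dfree\Omega_C^q$, $y=(y_1,y_1^q)\in\dfi\Omega_C^q$, and $z=\alpha x+(1-\alpha)y$ again on $\dfree\Omega_C^q$ is higher precisely when relation~\eqref{eq1} holds. Since~\eqref{eq1} is homogeneous, the ratio $a=x_1/y_1$ satisfies~\eqref{eq2}.

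The only genuinely new point is the root count of~\eqref{eq2} for $q\le-1$. Setting $g(a)=C(\alpha a+(1-\alpha))^q-\alpha Ca^q-(1-\alpha)$, one computes $g(1)=(C-1)(1-\alpha)>0$, whereas the negativity of $q$ forces $g(a)\to-\infty$ as $a\to0^+$ and $g(a)\to-(1-\alpha)<0$ as $a\to+\infty$. These limits hold for every $\alpha\in(0,1)$, which is why no threshold on $\alpha$ appears in the statement, in contrast to the case $q>1$. Consequently~\eqref{eq2} has one root in $(0,1)$ and one in $(1,+\infty)$; a short monotonicity check of $g$ confirms there are exactly two positive roots. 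The two roots correspond to higher segments with $x_1<y_1$ and with $x_1>y_1$, and now it is the latter --- hence the bigger root $a>1$ --- that governs the minimal extension, the roles of the two roots being interchanged relative to Proposition~\ref{propap1}.

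From here the argument is verbatim the previous one. The lens $\Omega_C^q$ is invariant under $\mathrm{Aff}_t\colon(u_1,u_2)\mapsto(tu_1,t^qu_2)$, $t>0$, and these maps preserve the property of being a higher segment, so the minimal $\alpha$-extension is $\mathrm{Aff}$-invariant and therefore equals $\Omega_{C'}^q$ for some $C'>C$. To pin down $C'$ I normalize $y=(1,1)$, so the relevant higher segment joins $y$ to $x=(a,Ca^q)$ with $a$ the bigger root, and I maximize $t_2t_1^{-q}$ over its points $(t_1,t_2)=(1+s(a-1),\,1+s(Ca^q-1))$, $s\in[0,1]$. Solving the critical-point equation for $s$ and substituting back yields the value~\eqref{eq3b}.

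The main obstacle is purely the sign bookkeeping induced by $q<0$. The monotonicity and convexity comparisons that selected the smaller root when $q>1$ reverse direction, so I must verify carefully that the larger root of~\eqref{eq2} is the correct one, that the interior critical point of $t_2t_1^{-q}$ is a maximum rather than a minimum or an endpoint, and that the resulting algebra collapses to exactly the exponents $1-q$ and $-q$ displayed in~\eqref{eq3b}. Each of these is routine once the sign of $q$ is carried consistently, but it is where an error would most easily creep in.
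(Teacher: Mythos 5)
Your proposal is correct and is essentially the paper's own argument: the paper gives no separate proof of this proposition beyond the remark that one argues as in Proposition~\ref{propap1}, and your adaptation is exactly that argument carried out --- the root count of~\eqref{eq2} (negative $q$ makes $g(a)\to-\infty$ as $a\to0^+$ and $g(a)\to-(1-\alpha)$ as $a\to+\infty$, while $g(1)=(C-1)(1-\alpha)>0$, with $g$ increasing on $(0,1)$ and decreasing on $(1,\infty)$) is right and explains the absence of a threshold on $\alpha$, and maximizing $t_2t_1^{-q}$ along the normalized higher segment $[(1,1),(a,Ca^q)]$ does produce~\eqref{eq3b}, since the critical point gives $t_1=\frac{q(a-Ca^q)}{(1-q)(Ca^q-1)}$, $t_2=\frac{a-Ca^q}{(1-q)(a-1)}$ and $q^{-q}(Ca^q-1)^{-q}=(-q)^{-q}(1-Ca^q)^{-q}$. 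The one point you defer as ``routine'' --- that the family of higher segments coming from the bigger root dominates the family coming from the smaller root, so that the maximum of the two candidate values of $C'$ is the one in~\eqref{eq3b} --- is likewise left unproved in the paper (even in Proposition~\ref{propap1} the choice of the smaller root is simply asserted), so your write-up is at least as complete as the original.
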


Consider now $q \in (0,1)$. In order to survey $\alpha$-extensions of $\Omega_C^q$, consider the affine transformation $T\colon (u,v)\mapsto (\frac{v}{C},u)$. Then $T(\Omega_C^q) = \Omega_{C^{q'}}^{q'}$, where $q'=1/q$. The lens $\tilde\Omega$ is an $\alpha$-extension of $\Omega_C^q$ if and only if $T(\tilde\Omega)$ is an $\alpha$-extension of $\Omega_{C^{q'}}^{q'}$, but $q'>1$, therefore, one can use Proposition~\ref{propap1} to verify this property. 

For the case $q \in (-1,0)$ one can use the symmetry $T \colon (u,v) \mapsto (v,u)$ to reduce the question about an $\alpha$-extension of $\Omega_C^q$ to the question about $\Omega_{C^{-q'}}^{q'}$ with $q'=1/q$ and use Proposition~\ref{propap2} for it.

We have finished the description of $\alpha$-extensions of lenses $\Omega_C^q$ and are ready to connect it with the $A_{p_1,p_2}$ classes, where $p_1>p_2$. Recall  that a positive function $\vf$ is in $A_{p_1,p_2}$ class on an interval $J$ with the constant $Q$ if $\av{\vf^{p_1}}{I}^{1/p_1} \av{\vf^{p_2}}{I}^{-1/p_2}\leq Q $ for any subinterval $I\subset J$. Note that for $p_2>0$ this property is equivalent to the fact that the function $((Q^{-1}\vf)^{p_1},\vf^{p_2})$ is in $\Class_{\Omega_C^q}$, where $C=Q^{p_2}$, $q=\frac{p_2}{p_1}$, and for $p_2<0$  it is equivalent to the fact that the function $(\vf^{p_1},\vf^{p_2})$ is in $\Class_{\Omega_C^q}$, where $C=Q^{-p_2}$, $q=\frac{p_2}{p_1}$. Thus, the question about the monotonic rearrangement operator for dyadic-type $A_{p_1,p_2}$ classes can be investigated via $\alpha$-extensions of corresponding lenses $\Omega_C^q$. Unfortunately, for general $p_1$ and $p_2$ it does not seem possible to give a short answer for this question, but for the special case $p_1=1$ and $p_2=-1$ (which corresponds to the $A_2$ class) we can follow the above procedure and obtain a short answer similar to Corollary~\ref{corBMO} (note that it proves Corollary~\ref{corA2}).

\begin{Cor}\label{A2alphaextension}
For any $\alpha \in (0,1)$ and any $C \geq 1$ the lens $\tilde\Omega$ is an $\alpha$-extension of~$\Omega^{-1}_C$ if and only if $\tilde\Omega \supset \Omega^{-1}_{C'}$\textup, where $C' = \frac{C(\alpha+1)^2-(\alpha-1)^2}{4\alpha}$.
\end{Cor}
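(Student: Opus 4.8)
The plan is to specialize Proposition~\ref{propap2} to the value $q=-1$, which is permitted since $-1\le-1$ and this proposition carries no restriction on $\alpha$, and then to carry out the resulting algebra. First I would write equation~\eqref{eq2} for $q=-1$, namely $C(\alpha a+(1-\alpha))^{-1}=\alpha Ca^{-1}+(1-\alpha)$. Clearing denominators and dividing by the positive factor $(1-\alpha)$, this collapses to the quadratic
\begin{equation*}
\alpha a^2-\big[(C-1)+\alpha(C+1)\big]a+\alpha C=0.
\end{equation*}
Writing $B=(C-1)+\alpha(C+1)$, Vieta's formulas give that the two roots multiply to $C$ and sum to $B/\alpha$. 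For $C\ge1$ one checks that $B>0$ and the discriminant is positive, so both roots are positive and the larger root $a$ is well defined, its companion being $C/a$.

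Next I would substitute $q=-1$ into~\eqref{eq3b}. The exponents simplify ($1-q=2$, $-q=1$, $(-q)^{-q}=1$, $(1-q)^{1-q}=4$), and the formula reduces to
\begin{equation*}
C'=\frac{(a^2-C)^2}{4a(a-1)(a-C)}.
\end{equation*}
The heart of the argument is to show that the dependence on the unwieldy root $a$ disappears. Using that the second root equals $C/a$, I would write $a^2-C=a\,(a-C/a)=a(a_1-a_2)$, so that $(a^2-C)^2=a^2\big((B/\alpha)^2-4C\big)$ by the identity $(a_1-a_2)^2=(a_1+a_2)^2-4a_1a_2$. For the denominator I would invoke the quadratic relation $a^2=\tfrac{B}{\alpha}a-C$ to get $(a-1)(a-C)=a^2-(1+C)a+C=a\big(\tfrac{B}{\alpha}-1-C\big)$, and since $\tfrac{B}{\alpha}-1-C=\tfrac{C-1}{\alpha}$ this equals $a\,\tfrac{C-1}{\alpha}$. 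Every power of $a$ then cancels, leaving
\begin{equation*}
C'=\frac{B^2-4\alpha^2C}{4\alpha(C-1)}.
\end{equation*}

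Finally I would expand $B^2-4\alpha^2C$ with $B=(C-1)+\alpha(C+1)$, grouping the quadratic-in-$\alpha$ terms so that $\alpha^2(C+1)^2-4\alpha^2C=\alpha^2(C-1)^2$; this yields $B^2-4\alpha^2C=(C-1)\big[C(\alpha+1)^2-(\alpha-1)^2\big]$, and dividing by $4\alpha(C-1)$ produces exactly $C'=\frac{C(\alpha+1)^2-(\alpha-1)^2}{4\alpha}$. The degenerate endpoint $C=1$, where the lens collapses to the hyperbola $x_1x_2=1$, is consistent as a limit, since the formula then returns $C'=1$. I expect the only genuine obstacle to be the bookkeeping in the middle step: the expression for $C'$ looks forbidding until one recognizes $C/a$ as the second root, which is precisely what lets the factor $a$ cancel cleanly and spares us from substituting the explicit surd for $a$.
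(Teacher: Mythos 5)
Your proposal is correct and follows exactly the route the paper intends: Corollary~\ref{A2alphaextension} is obtained by specializing Proposition~\ref{propap2} to $q=-1$ (allowed, with no restriction on $\alpha$), and the paper simply omits the algebra that you carry out; your simplification via Vieta's formulas (identifying $C/a$ as the companion root and using $a^2=\tfrac{B}{\alpha}a-C$ to collapse the denominator) correctly yields $C'=\frac{C(\alpha+1)^2-(\alpha-1)^2}{4\alpha}$. All the intermediate identities check out, so nothing further is needed.
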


\end{document}